\newtheorem{lemma}{Lemma}[section]
\newcommand{\dsum}{\displaystyle\sum}
\let\origmaketitle\maketitle
\def\maketitle{
  \begingroup
  \def\uppercasenonmath##1{} % this disables uppercasing title
  \let\MakeUppercase\relax % this disables uppercasing authors
  \origmaketitle
  \endgroup
}
\title[]{\Large The Waste-to-Biomethane Logistic Problem:\\ A mathematical optimization approach}
\author[V. Blanco, Y. Hinojosa \MakeLowercase{and} V. Zavala]{{\large V\'ictor Blanco$^\dagger$, Yolanda Hinojosa$^\ddagger$ and  Victor Zavala$^\star$}\medskip\\
$^\dagger$Institute of Mathematics, Universidad de Granada\\
$^\ddagger$Institute of Mathematics, Universidad de Sevilla\\
$^\star$ Wisconsin Institute for Discovery, University of Wisconsin-Madison}
\date{\today}
\begin{document}

\maketitle

\begin{abstract}
In this paper, we propose a new mathematical optimization approach to make decisions on the optimal design of a logistic system to produce biogas from waste. We provide an integrated model that allows decision makers to optimally determine the locations of different types of plants and pipelines involved in the logistic process, as well as the most efficient distribution of the products (from waste to biomethane) along the supply chain. We analyze the mathematical model and  reduce its size, being able to solve realistic instances in reasonable CPU times. The results of our computational experiments, both in synthetic and in a case study instance, prove the validity of our proposal in practical applications.
\keywords{Logistics; Green Energy; Facility Location; Mathematical Optimization; Biogas; Supply Chain}
\end{abstract}

\section{Introduction}

Decarbonization of energy has emerged as a critical global priority in recent years due to the urgent need to combat climate change. Most World Organizations have recognized the significance of transitioning to cleaner and more sustainable energy sources in the next few years to reduce greenhouse gas emissions. Different agreements have been signed to promote this change. Specifically, the United Nations, through its Sustainable Development Goals, has set different targets to ensure affordable, reliable, sustainable, and modern energy for all by 2030~\citep{ONU}. The International Energy Agency (IEA) has been actively promoting decarbonization efforts by providing policy recommendations, conducting research, and facilitating international cooperation~\citep{IEA}. Additionally, the Intergovernmental Panel on Climate Change (IPCC) \citep{IPCC} has been instrumental in assessing the impacts of climate change and highlighting the importance of decarbonizing the energy sector. In 2019, the European Commission presented the European Green Deal \citep{EGD}, an initiative that aimed to be a statement of intent on the road-map that Europe should follow for the implementation of the United Nations' Sustainable Development Agendas for 2030 and 2050, designed to mitigate the effects of climate change. Various measures have been taken by these organizations, including advocating for renewable energy investments, promoting energy efficiency, encouraging the use of electric vehicles, and supporting the development of innovative technologies such as carbon capture and storage. These collective efforts by world organizations are crucial in driving the global transition towards a low-carbon future and mitigating the adverse effects of climate change.

 One of challenges is to decarbonize the energy system by developing a new power sector based on renewable sources.  It is a known fact that one of the main strategies to achieve the proposed goal is the use of biogas as an alternative renewable energy source to carbon-based energies, since it contributes the reduction of greenhouse gases but also to the development of the circular economy through the anaerobic digestion of organic waste from different sources and its transformation into fuel. Since biomethane is the same molecule as natural gas, it can be distributed via the existing gas distribution networks, facilitating the transition from natural gas to biogas energy. Thus, the installation of anaerobic digestion plants for the conversion of organic waste and livestock manure into biomethane has gained prominence in recent years as a sustainable waste-to-energy solution and many countries are making special interest in the assessment of the installation and adaptation of these  biogas plants.
%and the evaluation of the amount of energy that these plants are capable to produce.
 For instance, the U.S. Farm Bill \citep[]{USA}, periodically reauthorized by Congress, includes provisions that support the development and utilization of biogas from agricultural sources. The bill provides funding for biogas projects, such as the installation of anaerobic digesters plants on farms or near them, which capture methane emissions from manure and convert them into usable biogas for electricity generation or transportation fuel.

Analyzing the logistic systems behind the implementation of different modes of energy, particularly biogas, is of paramount importance in ensuring the successful and sustainable integration of renewable energy sources into our global energy landscape. Biogas, derived from organic waste materials through anaerobic digestion, presents a promising alternative to conventional fossil fuels. However, its widespread adoption hinges on addressing intricate logistic challenges. Understanding the logistics involved in the collection, transportation, and processing of organic feedstocks is crucial for optimizing efficiency and minimizing environmental impact. Rigorous analyses on the logistical aspects of biogas production in different countries reveal opportunities for streamlining supply chains, enhancing resource utilization, and reducing overall costs~\citep[see, e.g.][]{muradin2018logistic,kwasny2017production}. Effective logistic planning also ensures reliable and consistent feedstock availability, a key factor in the stable operation of biogas plants. By delving into the logistics of biogas implementation, we can develop strategies that not only bolster the economic viability of this renewable energy source but also contribute to the broader goal of achieving a more sustainable and resilient energy infrastructure.

Among the decisions to make in the design of an efficient logistic systems, one of the most critical ones is  selecting the optimal sites to locate the different types of plants involved in the system (biogas plants, pre-treatment plants, and liquefaction plants). This is a difficult task since it involves different agents, different production and conversion technologies, different types of demand centers, and it has to be coordinated with the different distribution processes~\citep{egieya_jafaru_m_biogas_2018}. At this point, mathematical optimization plays a very important role since it is the main tool to determine solutions to complex logistics system like the one we are dealing with here. This fact has caused the publication of a large amount of literature related to this topic. For instance,
 Mathematical optimization has  been used in \citet{TAMPIO2022105-2} for the design of a cost-optimal processing route for a biogas anaerobic digestion plant  to produce fertilizer products based on specified regional needs.
In \citet{BALAMAN2014289} a mixed integer linear programming model is developed to determine the  appropriate locations for the biogas plants and biomass storages.
 \citet{SCARLAT2018915} provides a spatial analysis algorithm that uses data of  manure production and collection  to assess  the spatial distribution of the biogas potential in Europe, in order to decide the location of bioenergy plants.  A geographic information system (GIS) based analysis is used in \citet{VALENTI201850} to determine the size and location of four biogas plants for the Catania province in Sicily. A multi-objective optimization approach for the optimal location of  biogas and biofertilizer plants  at the maximum proﬁt and the minimum environmental impact is presented in \citet{diaz-trujillo_optimization_2019} and it is applied for a geographical region in Mexico.
Mathematical methods have been used to study the location of bioenergy plants in many other regions and countries (\citet{Park2019} in North Dakota, \citet{SULTANA2012192} in the province of Alberta, \citet{egieya_optimization_2020} in Slovenia, \citet{amigun_capacity-cost_2010} in Africa, \citet{silva_multiobjective_2017} in Portugal, \citet{soha_importance_2021} in Hungarian, \citet{iglinski_agricultural_2012} in Poland, \citet{delzeit_impact_2013} in Germany). Furthermore, mathematical optimization tools have been already recognized as a crucial tool to make decisions in the design of systems for biogas production. \cite{hernandez2017optimal},  propose a non linear optimization model to compute the optimal operating conditions of the reactors, the biogas composition and the content of nutrients in the digestate in the production of biodiesel from waste via anaerobic digestion. \citet{hu2018supply}  give a mathematical optimization framework to analyze the economical and environmental benefits of recovering biogas, caproic acid (C6), and caprylic acid (C8) from different sources of organic waste. \citet{ankathi2021gis} consider a mixed integer linear optimization model to determine the location and capacities of biogas plants based only in the location and production of the farms. In terms of locational decisions, some of the classical mathematical approaches have been already adapted to incorporate \textit{green goals}~\citep[see e.g.][]{martinez2017green}, although more advances are expected within the next few year based on what our society needs.

On the other hand,  mathematical optimization is recognized as a fundamental tool for the design and modeling of multiple logistics, transportation, and supply chain  problems \citep[see e.g.][among many others]{hinojosa2000multiperiod, hinojosa2008dynamic,blanco2010planning,blanco2020optimization,blanco2022network, blanco2023pipelines,blanco2024optimal}. Specifically, is particularly useful to construct robust and effective supply chains to integrate  bioenergy in any economy 
  \citep[see e.g.][and the references therein]{DEMEYER2015247,   sarker_optimal_2018, TOMINAC2022107666,   CAMBERO201462, GHADERI2016972}  . In particular, in order to implement an efficient and sustainable biogas distribution system, it is necessary to properly design a robust logistic plan for all the elements involved in the process, such as manure, biomethane, waste, etc.   In this phase,
 %For the biogas production and distribution system,
  it is crucial to decide not only where to locate the anaerobic digestion plants but also,  where to locate pre-treatment plants for cleaning and drying the waste or transshipment plants to distribute the products, how to collect the manures from the farms or fields and sending them to the plants, how to link (if needed) the different types of plants, how to distribute the final biomethane to the gas distribution network or to external clients, how to dispatch possible fertilizers back to some of the farms, etc. 
A few works have already proposed models to make decision in this setting.  \citet{jensen_optimizing_2017} propose minimum cost flow based model for finding the optimal production and investment plan for a biogas supply chain by means of minimizing the transportation cost on an existing supply chain network. 
Three layers of analysis for designing optimal animal waste supply for anaerobic bio-digestion are detailed in  \citet{MAYERLE201646}: (1) Identification of the optimal locations for the anaerobic digestion plants the farms' information; (2) specification of the optimal distribution system; and (3) an  operational layer that includes scheduling optimal biomass collection from each farm to minimize biogas loss.
\citet{sarker_modeling_2019} studies the optimization of the supply chain cost for a biomethane gas production system which is organized in four stages (collecting feedstock to hubs located according to zip code areas, transporting feedstock from hubs to reactor(s), transporting biomethane gas from reactor(s) to condenser(s), and  shipping the liqueﬁed biomethane gas from condensers to final demand points). The problem is formulated as a  mixed-integer non-linear mathematical optimization problem and, due to the complexity of the non-lineal model, a genetic heuristic algorithm is propose to solve it.
 
%This will make an alternative energy system affordable and effective.

  In this paper, we introduce the  Waste-to-Biomethane Logistic Problem and provide a general and flexible mathematical optimization model 
 to make decisions of locating biogas, pre-treatment, and liquefaction plants and distributing the different types of elements along a complex network that integrates all the aspects mentioned above. Thus, we consider a supply chain for the biomethane gas production combining six stages: (A) collecting waste and sending it to pre-treatment plants (independently of the zip code); (B) delivering contaminant-free organic waste from pre-treatment plants to anaeorobic digestion plants; (C) constructing pipelines for transporting  biomethane from anaerobic digestion plants to injection points of an existing gas pipeline network;  (D) constructing pipelines for transporting  biomethane from anaerobic digestion plants to biomethane liquefaction plants; (E) dispatching fertilizer from anaerobic digestion plants to waste sources and; (F) shipping liquefied biomethane from biomethane liquefaction plants to customer demand points. 
We jointly integrate all these complex stages into a mixed integer linear programming (MILP) model seeking to minimize the overall transportation cost of the system by assumming that a limited budget is given to install all the plants and pipelines. Our MILP model can be efficiently solved using  off-the-shelf optimization solvers (as Gurobi, CPLEX, or FICO) after proving some theoretical results for reducing the number of variables. This model is applied and tested in a real-word dataset based on the region of upper Yahara Watershed in the state of Wisconsin   \citep[see][for further information about this dataset]{ SAMPAT2019352, SAMPAT2021105199}.

The remainder of this paper is organized as follows. In Section \ref{sec:problem}  we introduce the Waste-to-Biomethane Logistic Problem (W2BLP) and present our modeling assumptions. Section \ref{sec:model} is devoted to detail the mathematical optimization model that we develop for the problem. We also prove in this section a theoretical result that allows us to significantly reduce the number of variables in the model.  In Section \ref{sec:comput}, we report the results of our computational experiments on realistic synthetic instances. A case study is presented in Section \ref{sec:case-study} where the model is applied to the real-world dataset based on the region of upper Yahara Watershed.
The paper closes in Section \ref{sec:conclu} with some conclusions and future research.

\section{The Waste-to-Biomethane Logistic Problem \label{sec:problem}}

The problem under analysis consists on efficiently use the agricultural waste, as livestock manure, energy crops, municipal waste, etc,  to be transformed into biomethane.  This transformation requires different phases.
First, from a given  set of waste sources (WS), as farms or residual storages, each of them producing an amount of waste, it is transported into pre-treatment (PT) plants where the non-organic material is removed, and what remains is dried, pressed and prepared. The location of the PT plants is to be determined among a finite set of potential positions.

The dried contaminant-free organic waste (DOW) obtained after the pre-treatment, is delivered to an anaerobic digestion (AD) plant, where it is transformed into biomethane. %and digester solids.
The position of the AD plants is also to be decided among a finite set of potential locations. Biomethane is then either directly injected into an existing gas pipeline network (GPN) or processed into liquified natural gas (LNG). We assume that we are given a finite set of injection points in the GPN and that pipelines connecting the AD plants with a selected set of injections point are to be built. A minimum percent of the produced biomethane is to be injected in the GPN, and the remainder biomethane is transformed into LNG and distributed to a given set of extra customers (EC), each of them with a given demand. This minimum percent represents the trade-off between the gas injection to the network with respect to the amount of gas served to the external customers. The liquefaction of biomethane requires the construction of  Biomethane Liquefaction (BL) plants as well as pipelines connecting the AD plants with them. Then, the liquified gas is delivered to the customers using tank trucks. Finally,  the remaining material in the DOW to biomethane transformation process (digester solids) is then returned back from the AD plants to some of the WS where it can be used as fertilizer. In Figure \ref{fig:prob} we illustrate the different elements that appear in this logistic problem. The names are differentiated by color. In blue color we indicate the input data (WS, GPN and EC), whereas in green color we highlight the decisions to be made on: the (PT, AD and BL) plants and the (to GPN and BL) pipelines to be installed. The transportation routes are highlighted with red arrows in the plot.

\begin{figure}[h]
\begin{center}
\includegraphics[width=0.95\textwidth]{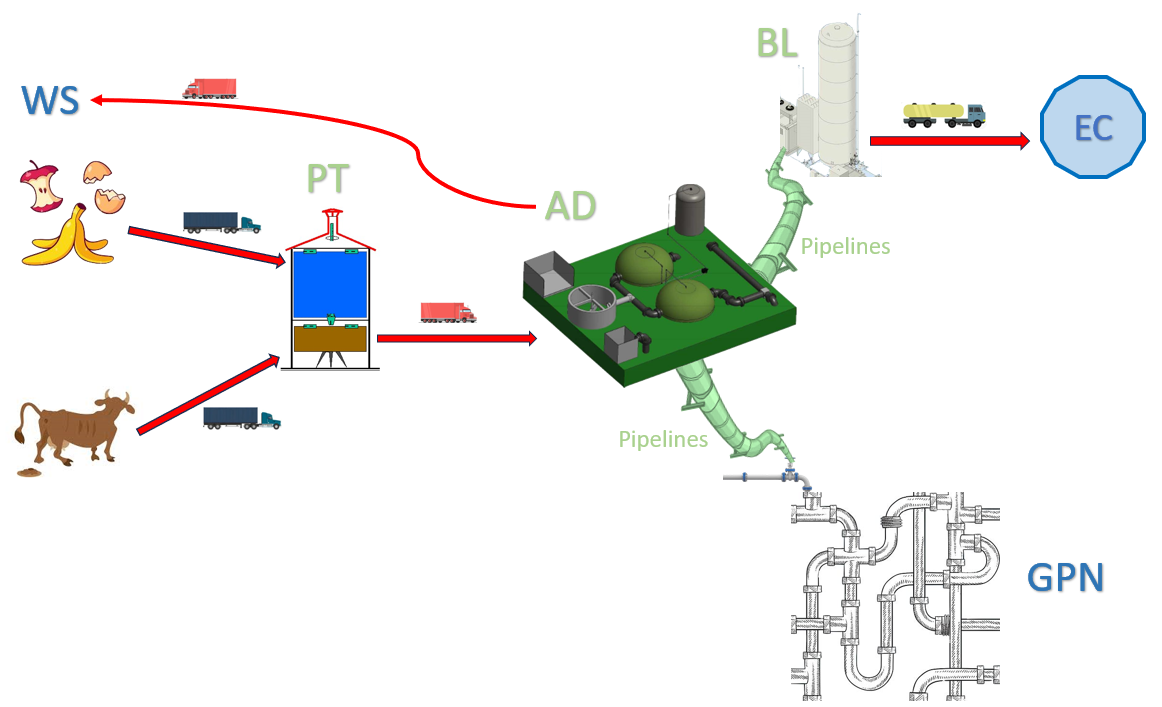}
\caption{The process modeled in the W2BLP.\label{fig:prob}}
\end{center}
\end{figure}

The following hyphoteses are assumed in our model, based on the technological requirements of the process:
\begin{itemize}
\item A minimum percentage of the total amount of waste in the WS must be collected and sent to the PT plants to satisfy the gas requirements of the GPN. Since a given percent of the produced biomethane is to be injected in the GPN, this hypothesis implies that a minimum amount of biogas must be produced to be injected to the network.
\item Each PT plant sends its whole produced DOW to the AD plants. In the AD plants, the DOW received is transformed into biomethane. The digester solids of this transformation can be delivered to the WS as fertilizers.
\item We assume that each WS receives a given proportion of the total amount of digester solid produced at the AD plants. In case this proportion is $0$, the WS does not receive any digester solid (as in the case of residual storages that may not be interested in the product). 
We do not force delivering the whole digester solid produced in the AD plants to the WS.
%A given percent of the total DOW  received in AD is lost during this process.
%\item A percent of the amount (weight and volume) of the initial waste is assumed to be lost at each phase of the whole process.
\item The whole biomethane production obtained in the AD plants must be sent.  It is shared into the GPN and the BL plants but, as already mentioned, a minimum percentage of the total biomethane production is required to be injected  into the GPN.
\item Each BL plant sends its whole LNG production to the EC.
\item The demand of the EC is not assumed to be fully satisfied. Instead, we consider that every EC receives a pre-specified proportion of %its demand.
the total LNG produced at the BL plants. This proportion may depends on its demand or other preference criterion that the agents consider reasonable.

\item A percent of the amount (weight and volume) of the different products is assumed to be lost at each phase of the whole process. Specifically, a given percentage of the amount of waste, DOW, and biomethane received in PT, AD and BL, respectively, is lost during the process. This technical hyphotesis is derived from the chemical processes to obtain the different types of products from their raw materials. 
\end{itemize}

The efficient management of this system requires both deciding the location of the different types of plants (PT, AD, and BL) and pipelines (connecting AD with GPN and AD with BL) to be constructed, as well as the distribution of the product along the different phases of this process.

There is a cost for opening each potential location of each different type of plants, and a construction cost for unit length for the pipelines. We assume that a budget is given to install all the plants and the pipelines.

For the distribution of the products we assume that a unit transportation cost is provided for the different phases, namely, delivering from WS to the PT plants (waste), from PT to the AD plants (DOW), from the BL plants to the EC (LNG), and from AD to the WS (digester solid).

The Waste-to-Biomethane Logistic Problem (W2BLP) consists of deciding the location of the different plants and pipelines, and the way the product is distributed, minimizing the overall transportation cost with the given installation budget.

\section{Mathematical Optimization Model\label{sec:model}}

In this section we present the mathematical optimization model that we propose for the W2BLP. First, we define the parameters and variables that we use in our model.

\subsection{Parameters}

The following parameters are assumed to be given to our model.

\subsubsection{Index Sets}

\noindent$-$ $N =\{1, \ldots, n\}$: Index set for the set WS.\\
$-$ $P_1=\{1, \ldots, m_1\}$: Index set for the set of potential PT plants.\\
$-$  $P_2=\{1, \ldots, m_2\}$: Index set for the set of potential AD plants.\\
$-$ $P_3=\{1, \ldots, m_3\}$: Index set for the set of potential BL plants.\\
$-$  $I=\{1, \ldots, |I|\}$: Index set for Injection Points in GPN.\\
$-$ $C=\{1, \ldots, |C|\}$: Index set for the set EC.

\subsubsection{Production parameters}

\noindent$-$ $w_i$: waste produced at WS $i\in N$.  \\
$-$ $\mathrm{W}= \dsum_{i\in N} w_i$: Overall production of waste in all the WS. \\
$-$ $\delta$: Proportion of waste transformed into DOW at the PT plants ($0<\delta<1$).\\
$-$ $\gamma_1$: Proportion of DOW transformed into biomethane at the AD plants ($0<\gamma_1<1$).\\
$-$ $\gamma_2$: Proportion of DOW transformed into fertilizers at the AD plants ($0<\gamma_2<1$). We assume that $\gamma_1+\gamma_2<1$.\\
$-$ $\beta$: Proportion of biomethane transformed into LNG at the BL plants ($0<\beta<1$).\\
$-$ $D_\ell$: Demand of EC $\ell\in C$.\\
$-$ $p$:  Lower bound percent of the total waste produced at the WS that must be collected and sent to the PT plants ($0<p\leq 1$).\\
$-$ $q$: Lower bound percent of the total biomethane produced at the AD plants that must be injected  into the GPN ($0<q\leq 1$).\\
$-$ $R_i$: Proportion of the total amount of fertilizer to be deliver to WS $i\in N$ ($\dsum_{i\in N} R_i\leq 1$).\\
%{\color{red}
$-$ $\alpha_\ell$: %Demand proportion of the demand point $\ell\in C$  that will be satisfied
 Proportion of the total amount of  LNG at the BL plants that will be received by demand point $\ell\in C$ 
($\dsum_{\ell\in C} \alpha_\ell=1$).

\subsubsection{Costs}

{\bf Set-up Costs:}

These costs represent the installation  costs of the different type of plants and links.
They may include not only the construction cost of the different plants and pipelines, but also other costs related with the maintenance and use of them, as labor costs, land costs, etc.

\noindent$-$  $f^1_j$: Set-up cost for opening PT plant $j\in P_1$.\\
$-$  $f^2_j$: Set-up cost for opening AD plant $j\in P_2$.\\
$-$ $f^3_j$: Set-up cost for  opening BL plant $j\in P_3$.\\
$-$ $h_{jk}$: Set-up cost for installing a pipeline linking  AD plant $j\in P_2$ with BL plant $k\in P_3$ .\\
$-$ $g_{j\ell}$: Set-up cost for installing a pipeline linking  AD plant $j\in P_2$ with injection point in GPN $\ell \in I$.\\
$-$ $\mathrm{B}$: Total budget for installing plants and pipelines.\\

\noindent {\bf Transportation Costs:}

These costs represent the transportation costs of the different products (waste, DOW, LNG, and digester solid) % which ,  except the biomethane gas, 
that are usually transported  by trucks from the different geographical locations of the elements in the system.

\noindent $-$ $c^1_{ij}\geq 0$: Unit transportation cost of  waste from WS $i\in N$ to PT plant $j\in P_1$.\\
$-$ $c^2_{jk}\geq 0$: Unit transportation cost of DOW from PT plant $j\in P_1$ to AD plant $k\in P_2$.\\
$-$ $c^C_{j\ell} \leq 0$: Unit transportation cost (profit) of LNG  from BL plant $j\in P_3$ to EC $\ell \in C$.  Note that this non positive cost represent the benefit of delivering to  the EC each unit of liquified gas for their particular use.\\
$-$ $c^N_{ji} \geq 0$: Unit transportation cost of digester solid  from AD plant $j\in P_2$ to WS $i\in N$.

Note that the transportation costs for the biogas through the pipelines are already included in the set-up costs described above.

\subsection{Variables}

Our model makes decision on the different plants and pipelines that are opened, as well as the amount of product delivered at the different phases of the process.

\noindent{\bf Binary variables:}

The following variables determine whether a plant or a pipeline should be installed.

$
y_{j}^1 = \begin{cases}
1 & \mbox{if PT $j$ is open,}\\
0 & \mbox{otherwise}
\end{cases} \ \ \forall j\in P_1, \ \
\quad y_{j}^2 = \begin{cases}
1 & \mbox{if AD $j$ is open,}\\
0 & \mbox{otherwise}
\end{cases}\ \ \forall j\in P_2,
$

$
y_{j}^3 = \begin{cases}
1 & \mbox{if BL $j$ is open,}\\
0 & \mbox{otherwise}
\end{cases} \ \ \forall j\in P_3, \ \
\quad z_{jk} = \begin{cases}
1 & \mbox{if  a pipeline linking AD $j$ }\\
& \mbox{ with BL $k$ is built $\qquad\forall j\in P_2, k\in P_3$,}\\
0 & \mbox{otherwise}
\end{cases}  \ \ 
$

$$
t_{j\ell} = \begin{cases}
1 & \mbox{if a pipeline linking AD plant $j$ with }\\
& \mbox{injection point in GPN $\ell$ is built }\\
0 & \mbox{otherwise}
\end{cases}  \qquad \forall\, j\in P_2,\quad \ell\in I,
$$

\noindent{\bf Continuous Variables:}

The following variables of our model decide the amount of product delivered between the different sites in the system.

$
x_{ij}^1:$ amount of waste delivered from WS $i$ to the PT plant $j$, $\forall$ $i\in N$, $j\in P_1$.

$
x_{jk}^2:$ amount of DOW delivered from PT plant $j$ to AD plant $k$, $\forall$ $j\in P_1$, $k\in P_2$.

$
x_{jk}^3: $ amount of biomethane delivered from AD plant $j$ to BL plant $k$, $\forall$ $j\in P_2$, $k\in P_3$.

$
x_{j\ell}^I: $ amount of biomethane delivered from AD plant $j$ to GPN's injection point $\ell$, $\forall$ $j\in P_2$, $\ell\in I$.

$
x_{ji}^N:$  amount of digester solid delivered from AD $j$ to WS $i$, $\forall$ $j\in P_2$, $i\in N$.

$
x_{j\ell}^C: $ amount of LGN delivered from BL plant $j$ to EC $\ell$, $\forall$ $j\in P_3$, $\ell\in C$.

\subsubsection{Objective Function}

The goal of the W2BLP is to minimize the total transportation cost of the system. For that, one must decide the optimal position of the different plants and pipelines with the given budget $B$. These locations have a direct impact in the transportation cost.

With the above set of variables, the overall transportation cost can be written as follows:
\begin{align}
\dsum_{i\in N}\dsum_{j\in P_1} c_{ij}^1 x^1_{ij} + \dsum_{j\in P_1}\dsum_{k\in P_2} c_{jk}^2 x^2_{jk}
     + \dsum_{j\in P_2}\dsum_{i \in N} c^N_{ji} x^N_{ji} %+
      +\dsum_{j \in P_3}\dsum_{\ell\in C} c^C_{j\ell} x^C_{j\ell}
      \label{of:transport}.
\end{align}

\subsection{Constraints}

The assumptions of our problem  %above sets of variables
are adequately %defined
established by the following constraints:

\begin{itemize}
\item The budget for installing the different plants and pipelines must be satisfied:
\begin{equation}
\dsum_{j\in P_1} f^1_j y_j^1 + \dsum_{j\in P_2} f_j^2 y_j^2+ \dsum_{j\in P_3} f^3_j y_j^3 + \dsum_{j\in P_2}\dsum_{k\in P_3}h_{jk}z_{jk}+\dsum_{j\in P_2}\dsum_{\ell\in I}g_{j\ell}t_{j\ell}\leq \mathrm{B}\label{con:setup}
\end{equation}
\item \textit{Flow Conservation Constraints}: The product must be adequately routed through the intermediate stages of the supply chain and enforced by the following constraints:
  \begin{equation}
\dsum_{k\in P_2} x^2_{jk} = \delta \dsum_{i\in N} x^1_{ij} , \forall j \in
P_1,\label{flow:2}
\end{equation}
\begin{equation}
\dsum_{k\in P_3} x^3_{jk}+\dsum_{\ell\in I} x^I_{j\ell} = \gamma_1 \dsum_{k\in P_1} x^2_{kj} , \forall j \in
P_2.\label{flow:3}
\end{equation}
 \begin{equation}
\dsum_{i\in N} x^N_{ji} = \gamma_2 \dsum_{k\in P_1} x^2_{kj}, \forall j \in
P_2,\label{flow:5}
\end{equation}
\begin{equation}
\dsum_{\ell\in C} x^C_{j\ell} = \beta \dsum_{k\in P_2} x^3_{kj}, \forall j \in
P_3,\label{flow:4}
\end{equation}

Constraints \eqref{flow:2} ensure that all the waste received at a PT plant is delivered to the AD plants once it is processed ($100\delta\%$ of the waste is transformed into DOW). $100\gamma_1\%$ of the amount of DOW received at each AD plant is transformed into biomethane and fully delivered either to the injection points or to the BL plants (Constraint \eqref{flow:3}) and $100\gamma_2\%$ is converted into digester solid and delivered to the WS (Constraint \eqref{flow:5}). Finally, Constraints \eqref{flow:4} assure that the amount of biomethane received at the BL plants is converted into $100\beta\%$ of LNG and fully ddelivered to the EC.

\item \textit{Demand and Capacity Constraints:} The different products obtained in the process are adequately delivered according to  demands and capacities. Specifically, the WS must send waste and receive  digester solid, the EC must receive  LNG and the GPN biomethane.
To this end, the following constraints are incorporated to our model:
\begin{equation}
  \dsum_{j\in P_1} x^1_{ij} \leq w_i, \forall i \in N,\label{flow:1}
  \end{equation}
   \begin{equation}\label{flow:q2}
 \dsum_{i\in N}\dsum_{j\in P_1}  x^1_{ij}\geq pW
  \end{equation}
   \begin{equation}\label{flow:q}
 \dsum_{j\in P_2}\dsum_{\ell\in I}  x^I_{j\ell}\geq q\gamma_1 \dsum_{k\in P_1} \dsum_{j\in P_2} x^2_{kj} 
 \end{equation}
% \begin{equation}\label{flow:q}
%  \dsum_{j\in P_2}\dsum_{\ell\in I}  x^I_{j\ell}\geq q\delta\gamma_1 W ????
%  \end{equation}
\begin{equation}
\dsum_{j\in P_3} x^C_{j\ell}\leq D_{\ell}, \forall \ell \in
C.\label{con:demand}
\end{equation}

Constraints \eqref{flow:1} assure that the amount delivered from each WS is at most the waste it produces. Constraint \eqref{flow:q2} forces to send, from the WS's, at least $100p\%$ of the total amount of waste.   Constraint \eqref{flow:q} states that at least $100q\%$ of the produced biomethane  must be injected in the GPN. Each EC, by Constraints \eqref{con:demand}, at most receive its required demand. Note that the transportation cost for the product in this phase is non positive, and then, if possible, this demand will be satisfied. 

Note also that, by \eqref{flow:2}, Constraints \eqref{flow:q2} and \eqref{flow:q} can be equivalently rewritten as:
 \begin{equation}\label{flow:qq}
  \dsum_{j\in P_2}\dsum_{\ell\in I}  x^I_{j\ell}\geq pq\delta\gamma_1 W
  \end{equation}

It may happen that if $q$ is large, the overall demand for the EC that can be satisfied is small. Thus, one can establish a sharing rule for the available LNG, i.e., a vector $(\alpha_1, \ldots, \alpha_{|C|}) \in [0,1]^{|C|}$ representing the percent of the LNG at the BL plants that will be received by each EC. The following constraints assure the adequate verification of this share:
\begin{equation}
\dsum_{k\in P_3} x^C_{k\ell} = \alpha_\ell\beta \dsum_{j\in P_2}\dsum_{k\in P_3} x^3_{jk} , \forall \ell \in
C.\label{flow:7}
\end{equation}

Finally, Constraints \eqref{flow:6} force that each WS receives its given percentage of the total production of digester solid:
\begin{equation}
\dsum_{j\in P_2} x^N_{ji} =  R_i\gamma_2 \dsum_{j\in P_1}\dsum_{k\in P_2} x^2_{jk}, \forall i \in
N,\label{flow:6}
\end{equation}
%{\color{red} Ajustar notacion: P por W los indices...}

  \item \textit{Distribution through open Plants and Links}: The transportation of the product in the different phases forces the plants and/or pipelines to be installed. This is assured by the following constraints:
  \begin{align}
 \dsum_{i\in N} x^1_{ij} &\leq pW y_j^1, &\forall j \in
P_1,\label{con:x1-y1-2}\\
 \dsum_{k\in P_1} x^2_{kj} &\leq p\delta W  y_j^2 , &\forall j \in
P_2,\label{con:x2-y2-2}\\
 x^3_{jk} &\leq p\delta \gamma_1 W  z_{jk} , & \forall j \in P_2, \forall k \in
P_3,\label{con:x3-z2}\\
  x^I_{j\ell} &\leq p\delta \gamma_1 W  t_{j\ell} , &\forall j \in P_2, \forall \ell \in
I.\label{con:x5-z2}
\end{align}
These constraints assure that in case a plant or a link is not open, the product cannot be distributed through the corresponding plant or link.
\item \textit{Compatibility of open plants and links}: The installation of a pipeline linking a plant with other plant or with an injection point in the GPN, is subject to the previous instalation of the plants that the pipelines are connecting. These conditions are imposed in our model with the following linear constraints:
\begin{align}
&z_{jk}\leq y_k^3, \forall j\in P_2, k\in P_3\label{con:z-y3}\\
&z_{jk}\leq y_j^2, \forall j\in P_2, k\in P_3\label{con:z-y2}\\
&t_{j\ell}\leq y_j^2, \forall j\in P_2, \ell\in I\label{con:zp-y2}
\end{align}
\end{itemize}

Summarizing all the previous descriptions, the W2BLP can be model with the following mathematical optimization problem that we denote by $(W2BLP)_0$:

\begin{align*}
\min
     &  \dsum_{i\in N}\dsum_{j\in P_1} c_{ij}^1 x^1_{ij} + \dsum_{j\in P_1}\dsum_{k\in P_2} c_{jk}^2 x^2_{jk}
     + \dsum_{j\in P_2}\dsum_{i \in N} c^N_{ji} x^N_{ji} %+
      +\dsum_{j \in P_3}\dsum_{\ell\in C} c^C_{j\ell} x^C_{j\ell}\\
      \mbox{s.t. } & \eqref{con:setup}-\eqref{flow:1}, \eqref{con:demand}-\eqref{con:zp-y2},\\
&y^1_j \in \{0,1\}, \forall j \in P_1,\\
&y^2_j \in \{0,1\}, \forall j\in P_2,\\
&y_{j}^3 \in \{0,1\}, \forall j\in P_3,\\
&z_{jk}\in \{0,1\}, \forall j\in P_2, k\in P_3,\\
&t_{j\ell}\in \{0,1\}, \forall j\in P_2, \ell\in I,\\
&x_{ij}^1 \geq 0, \forall i\in N, j\in P_1,\\
&x_{jk}^2 \geq 0, \forall j\in P_1, k\in P_2,\\
&x_{jk}^3 \geq 0, \forall j\in P_2, k\in P_3,\\
&x_{j\ell}^I \geq 0, \forall j\in P_2, \ell\in I,\\
&x_{ji}^N \geq 0, \forall j\in P_2, i\in N,\\
&x_{j\ell}^C \geq 0, \forall j\in P_3, \ell\in C.
\end{align*}
The above model is a mixed integer linear programming (MILP) problem,  which can be difficult to solve in commercial solvers for real-world instances. In the following result we show that the size of the model can be considerably reduced based on the fact that once decided the potential AD plant which is open, it will optimally distribute the biogas to the GPN through its less costly injection point.

\begin{lemma}
Let $\bar S = (\bar y^1, \bar y^2, \bar y^3, \bar z, \bar t, \bar x^1, \bar x^2, \bar x^3,\bar x^I,\bar x^N,\bar x^C)$ be an optimal solution of the model $(W2BLP)_0$. If $\bar t_{j\ell}=1$ for some  $j\in P$ and $\ell \in I$ then, there exists an optimal solution $\hat S = (\bar y^1, \bar y^2, \bar y^3, \bar z, \hat t, \bar x^1, \bar x^2, \bar x^3,\hat x^I,\bar x^N,\bar x^C)$ with:
$$
\hat t_{j\ell(j)}=1,\quad \hat x^I_{j\ell(j)} = \sum_{\ell\in I} \bar x^I_{j\ell},\qquad \hat t_{j\ell}=0,\quad \hat x^I_{j\ell} =0,\quad  \forall \ell\neq \ell(j),
$$
where $\ell(j)= \arg\min_{\ell\in I} g_{j\ell}$.

Furthemore, the overall set-up costs of $\hat S$ is smaller or equal than the ones for $\bar S$.
\end{lemma}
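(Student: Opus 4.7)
The plan is to construct the candidate solution $\hat S$ exactly as prescribed in the statement (keeping every variable equal to its $\bar S$ counterpart except $t_{j\cdot}$ and $x^I_{j\cdot}$) and then verify three things: (i) the transportation objective is unchanged, (ii) $\hat S$ remains feasible, and (iii) the set-up cost is no larger than that of $\bar S$. Item (i) is immediate because the objective \eqref{of:transport} involves only $x^1$, $x^2$, $x^N$ and $x^C$, none of which are modified.

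For feasibility, I would only need to verify the constraints that involve $t_{j\cdot}$ or $x^I_{j\cdot}$, namely \eqref{con:setup}, \eqref{flow:3}, \eqref{flow:q} (equivalently \eqref{flow:qq}), \eqref{con:x5-z2} and \eqref{con:zp-y2}. The flow-conservation identity \eqref{flow:3} and the injection lower bound \eqref{flow:q} depend only on the total $\sum_{\ell\in I}\bar x^I_{j\ell}$, which by construction equals $\hat x^I_{j\ell(j)}$, so both constraints are inherited from $\bar S$. For \eqref{con:x5-z2} at the pair $(j,\ell(j))$ I would chain \eqref{flow:3} with \eqref{con:x2-y2-2} to obtain
\begin{equation*}
\hat x^I_{j\ell(j)} \;=\; \sum_{\ell\in I} \bar x^I_{j\ell} \;\le\; \gamma_1 \sum_{k\in P_1} \bar x^2_{kj} \;\le\; p\delta\gamma_1 W\,\bar y^2_j \;=\; p\delta\gamma_1 W\,\hat t_{j\ell(j)},
\end{equation*}
using that $\bar y^2_j=1$ (which itself follows from \eqref{con:zp-y2} applied to any $\ell$ with $\bar t_{j\ell}=1$). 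For $\ell\neq\ell(j)$, \eqref{con:x5-z2} reads $0\le 0$, and \eqref{con:zp-y2} reads $\hat t_{j\ell}=0\le \bar y^2_j$, both trivial.

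It remains to handle the budget constraint \eqref{con:setup}. Since all set-up cost terms other than $\sum_{\ell\in I} g_{j\ell} t_{j\ell}$ are untouched, it suffices to observe that
\begin{equation*}
\sum_{\ell\in I} g_{j\ell}\,\hat t_{j\ell} \;=\; g_{j\ell(j)} \;\le\; g_{j\ell(j)} \sum_{\ell\in I} \bar t_{j\ell} \;\le\; \sum_{\ell\in I} g_{j\ell}\,\bar t_{j\ell},
\end{equation*}
where the first inequality uses that $\sum_{\ell} \bar t_{j\ell}\ge 1$ by hypothesis, and the second uses the defining minimality $g_{j\ell(j)}\le g_{j\ell}$ for every $\ell\in I$. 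Hence \eqref{con:setup} holds for $\hat S$ and the set-up cost has strictly decreased (or stayed equal). Combined with (i), this gives that $\hat S$ is feasible and attains the same objective value as $\bar S$, hence optimal, while the overall set-up cost comparison is the final claim of the lemma.

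There is no genuinely hard step here: the argument is a bookkeeping verification. The only subtle point I expect is remembering to invoke \eqref{con:x2-y2-2} (not just \eqref{flow:3}) to bound $\hat x^I_{j\ell(j)}$ by $p\delta\gamma_1 W$, so that the single remaining pipeline at $\ell(j)$ has enough capacity to absorb all the flow that was previously split across several injection points.
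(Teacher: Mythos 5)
Your proposal is correct and follows essentially the same argument as the paper, whose proof is a one-line observation that the objective ignores $t$ and $x^I$ while rerouting all flow through the cheapest link can only lower the set-up cost; you simply carry out the constraint-by-constraint verification explicitly. The one detail you add beyond the paper --- chaining \eqref{flow:3} with \eqref{con:x2-y2-2} to check that the aggregated flow $\sum_{\ell\in I}\bar x^I_{j\ell}$ fits under the bound $p\delta\gamma_1 W$ in \eqref{con:x5-z2} for the single surviving link --- is a worthwhile point that the paper leaves implicit.
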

\begin{proof}
The proof follows straightforward since there are no transportation costs between the AD plants and the injections points but linking them is accounted in the budget constraint. Thus, replacing the link $(j,\ell)$ by $(j,\ell(j))$ does not affect the objective function but reduces the set-up costs.
\end{proof}

The result above allows us to simplify the model by reducing the number of variables modeling the links and the flows between the AD plants and the GPN. Specifically, one can replace the two-index variables $t$ and $x^I$ by one-index variables that, abusing of notation, we denote as:

$$
t_{j} = \begin{cases}
1 & \mbox{if a pipeline linking AD plant $j$ with }\\
& \mbox{injection point $l(j)$ is built }\\
0 & \mbox{otherwise}
\end{cases} \qquad \forall\, j\in P_2,
$$

and

$x_{j}^I: $ amount of biomethane delivered from AD plant $j$ to GPN's injection point $l(j)$,  $\forall\, j\in P_2$.

Thus, if we denote by $g_j = g_{j\ell(j)}$ for all $j\in P_2$,
%With this observation,
one can replace the two-indices $t$ and $x^I$variables in model $(W2BLP)_0$ by the one-index $t$ and $x^I$variables above. Consequently, Constraints \eqref{con:setup}, \eqref{flow:3}, \eqref{flow:qq}, and \eqref{con:x5-z2} are replaced by the following inequalities and equations:
\begin{align}
\dsum_{j\in P_1} f^1_j y_j^1 + \dsum_{j\in P_2} f_j^2 y_j^2+ \dsum_{j\in P_3} f^3_j y_j^3 + \dsum_{j\in P_2}\dsum_{k\in P_3}h_{jk}z_{jk}+\dsum_{j\in P_2}g_{j}t_{j}\leq \mathrm{B}, \label{con:setupB}\\
\dsum_{k\in P_3} x^3_{jk}+ x^I_{j} = \gamma_1 \dsum_{k\in P_1} x^2_{kj} , \forall j \in
P_2.\label{flow:3B}\\
 \dsum_{j\in P_2} x^I_{j} \geq pq\delta\gamma_1 W,\label{flow:qB}\\
  x^I_{j} \leq p\delta \gamma_1 W  t_{j} , \forall j \in P_2.\label{con:x5-z2B}
\end{align}

As one can observe in the next section, the reduced model above is able to solve real instances of the W2BLP in reasonable CPU time. Our approach is then a useful tool for making decisions on the logistic of biogas production system.

\section{Computational Experiments \label{sec:comput}}

We have run a series of experiments to analyze the computational performance of our approach. The main goal of the experiments is to determine the computational limitations of our model and its ability to obtain solutions for real-world instances.%both computationally and managerially.

We have randomly generated different instances with different sizes and parameters. We generate the coordinates for the WS, the potential location for the different plants (PT, AD,and BL), the injection points in GPN and the EC uniformly in $[0,1000] \times [0,1000]$. For the sake of simplification, we assume that the number and the location of WS ($n$) is the same that the number and the location of potential PT plants ($m_1$), and that the number and the location of potential AD plants ($m_2$) and potential BL plants ($m_3$) are also the same. Aditionally, the number of EC  ($|C|$) also coincides with the number of injection points in GPN ($|I|$). The value of $n=m_1$ ranges in $\{25, 50, 100, 200, 500\}$, $m:=m_2=m_3$
ranges in $\{5, 10, 20, 50, 100\}$ with $m \in [\lceil\frac{n}{10}\rceil, \lfloor\frac{n}{2}\rfloor)$, and $d:=|I|=|C|$ ranges in $\{10, 20, 50, 100\}$ with $d \leq \lceil \frac{n}{2} \rceil$ (here $\lceil \cdot \rceil$ and $\lfloor \cdot \rfloor$ stand for the ceiling and floor integer rounding functions, respectively). 

Let ${\rm dist}_{ij}$ denote the Euclidean distance between locations $i$  and $j$.
We considered as unit transportation costs the Euclidean distances between the different points (${\rm dist}_{ij}$), except for the transportation costs of LNG from BL plants to the EC where the unit transportation cost is considered as a profit and it is defined as:
$$
c_{j\ell}^C = -\frac{\displaystyle\max_{k \in P_3, l \in C} ({\rm dist}_{kl})^2}{1+ {\rm dist}_{j\ell}}
$$
%$$
%c_{j\ell}^C = -\frac{\displaystyle\max_{k \in P_3, l \in C} \|b^3_k - a^3_l\|^2}{1+ \|b^3_j-a^3_{\ell}\|}
%$$
%where $\|\cdot\|$ is the Euclidean norm in the plane, and $b^3_\cdot$ and $a^3_\cdot$ are the %coordinates for the  potential BL plants and the EC, respectively.
These costs are designed to represent that as closer the extra customer to the open BL plant the larger the profit to satisfy its unit demand.

The set-up costs for the PT plants and the BL plants were chosen all equal to one unit (in million of \$) and the installation cost for the AD plants wer fixed to $5$ units (in million of \$). The set-up costs for the pipelines linking the AD plants with the injection points in GPN and the BL plants are %$\kappa$
0.1 times the normalized (by the maximum) Euclidean distance between the corresponding points.%, where $\kappa$ ranges in $\{0.1, 1\}$

For determining adequate budgets for a given instance, we first solve the problem without the budget constraint \eqref{con:setup}. After solving the problem, we compute its effective set-up cost by adding up the costs of the open plants and pipelines that route a positive flow. We denote by $\widehat{B}$ this set-up cost and consider as budget $\mathrm{B} = 0.2  \widehat{B}$. This budget indicates that one can only use $20\%$ of the cost of makig the best decision with no budget.

The slurry production at each farm ($w_i$ for $i\in N$) has been uniformly generated in $[1,100] \cap \mathbb{Z}_+$ (here $\mathbb{Z}_+$ stands for the set of nonnegative integer numbers). The percent of the overall produced biomethane that must be injected into the existing GPN, $q$, ranges in $\{50\%, 70\%, 90\%\}$. We have considered parameters $p=1$, $\delta=0.8$, $\gamma_1=0.8$, $\gamma_2=0.15$, $\beta=0.7$, $\alpha_\ell= \frac{D_{\ell}}{\sum_{l\in C} D_l}$ for all $\ell\in C$, and $R_i=\frac{w_i}{W}$ for all $i\in N$. The LNG demand of each EC ($D_\ell$ for $\ell\in C$) was randomly generated in $[\delta \gamma^1 \beta, 100 \delta \gamma^1 \beta]$.

The values of the parameters that we consider in our experiments are summarized in Table \ref{table:params}.

\renewcommand{\arraystretch}{1.3}
\begin{table}[h]
\adjustbox{width=\textwidth}{\begin{tabular}{c|c|c|c}
{\bf Coordinates} & $n = m_1$ & $m:=m_2=m_3$ & $d:=|I|=|C|$\\\hline\hline
Unif$[0,1000]^2$ & $\{25, 50, 100, 200, 500\}$ & $\{5, 10, 20, 50, 100\}$ with $m \in [\lceil\frac{n}{10}\rceil, \lfloor\frac{n}{2}\rfloor)$ &  $\{10, 20, 50, 100\}$ with $d \leq \lceil \frac{n}{2} \rceil$\\\hline
\multicolumn{4}{c}{}\\
$c^1_{ij},\; c^2_{ij},\; c^N_{ij}$ & $c^C_{j\ell}$ & $f^2$ & $f^1$, $f^3$ \\\hline\hline
${\rm dist}_{ij}$ & $-\frac{\displaystyle\max_{k \in P_3, l \in C} ({\rm dist}_{kl})^2}{1+ {\rm dist}_{j\ell}}$ & $5$ & $1$\\\hline
\multicolumn{4}{c}{}\\
$h_{jk}$ & $g_j$ & $B$ & $w_i$ \\\hline\hline
$0.1\times\frac{{\rm dist}_{jk}}{\max_{j', k'} {\rm dist}_{j'k'}}$ & $0.1\times\min_{\ell} \frac{{\rm dist}_{j\ell}}{\max_{k', \ell'} {\rm dist}_{k'\ell'}}$ & $0.2 \widehat{B}$ & Unif$[1,100]\cap \mathbb{Z}_+$\\\hline
\multicolumn{4}{c}{}\\
$q$ & $\delta=\gamma_1$ & $\gamma_2$ & $\beta$\\\hline\hline
$\{50\%, 70\%, 90\%\}$ & $0.8$ & $0.15$ & $0.7$\\\hline
\multicolumn{4}{c}{}\\
$p$ & $\alpha_\ell$ & $R_i$ & \multicolumn{1}{c}{$D_\ell$} \\\cline{1-4}\noalign{\vskip\doublerulesep
         \vskip-\arrayrulewidth}\cline{1-4}
1&$\frac{D_{\ell}}{\sum_{l\in C} D_l}$ & $\frac{w_i}{W}$ & \multicolumn{1}{c}{Unif$[\delta \gamma^1 \beta, 100 \delta \gamma^1 \beta]$} 
\end{tabular}}
\caption{Summary of the parameters used in our experiments.\label{table:params}}
\end{table}
\renewcommand{\arraystretch}{1}
The model has been coded in Python 3.7 in an iMac with 3.3GHz with an Intel Core i7 with 4 cores and 16GB 1867 MHz DDR3 RAM. We used Gurobi 9.1.2 as optimization solver. A time limit of 6 hours was fixed for all the instances. All the instances with $n\leq 100$ were optimally solved. For the larger instances we fixed a MIP Gap limit (the solver stops when reaching such a limit and outputs the best feasible solution). For $n=200$  we fix a MIPGap limit of $2\%$, for $n=500$ of $4\%$. For each combination of parameters $n, m, d, q$ we solved five instances. Thus, we have solved a total of $450$ instances.

\begin{figure}%[h]
\begin{subfigure}[b]{0.48\textwidth}
\includegraphics[width=\textwidth]{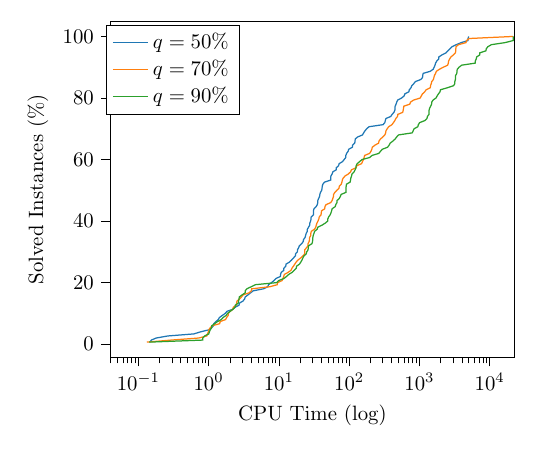}
\caption{CPU times (log scale).}
\end{subfigure}\quad
     \begin{subfigure}[b]{0.46\textwidth}
\includegraphics[width=\textwidth]{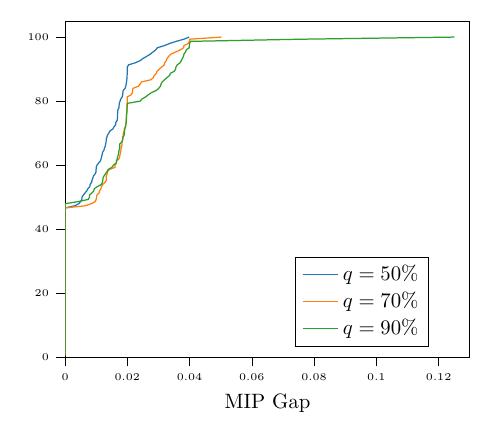}
\caption{ MIP Gaps.}
\end{subfigure}
     \caption{Performance profile of our computational experience for the different values of parameter $q$. \label{fig:Pf}}
\end{figure}

Figure \ref{fig:Pf} depicts performance profiles with respect to CPU times and MIP Gaps. In these pictures, we plots the percentage of instances solved up to each value of the CPU or MIP Gap, respectively. We analyze the computational performance of the model for the different values of the parameter $q$. As one can observe, the model seems to perform slightly better for the the smaller values of parameter $q$, although the differences are tiny. For this reason, in what follows we will not differentiate between the different values of the parameter $q$.

In Table \ref{t:compres} we summarize the results of our computational experiments. The first three columns indicate the size of the instances. Each of the rows summarize the results of a total of 15 instances (five random instances and three values of the parameter $q$). Column \textbf{CPUTime (secs)} is the average CPU time, in seconds, that the solver required to solve the instances. Column \textbf{MIPGap} indicates the average percent MIP Gap. In case the instance is optimally solved, the MIP Gap is $0\%$. Otherwise, this number reports either the MIPGap obtained within the time limit (if it is greater than the MIP Gap limit) or the MIP Gap when the MIP Gap limit was reached (that can be slightly smaller than the MIP Gap limit). Finally, column \textbf{UnSolved} indicates the percent instances summarized in the row that were not optimally solved or reaching the MIP Gap limit within the time limit.

\begin{table}[H]
  \centering
    \adjustbox{width=0.8\textwidth}{\begin{tabular}{ccc|rrr}
    $n$ & $m$& $d$ &\textbf{CPUTime (secs)} & \textbf{MIPGap} & \textbf{\%UnSolved}\\
\hline
\textbf{25} & \textbf{5} & 10    & 0.67  & 0\% & 0\% \\
      & \textbf{10} & 10    & 1.71  & 0\% & 0\% \\
\hline
\textbf{25 Total} &       &       & \textbf{1.19} & \textbf{0\%} & \textbf{0\%} \\
\hline
\textbf{50} & \textbf{5} & 10    & 1.58  & 0\% & 0\% \\
      &       & 20    & 2.24  & 0\% & 0\% \\
      & \textbf{10} & 10    & 4.55  & 0\% & 0\% \\
      &       & 20    & 4.70  & 0\% & 0\% \\
      & \textbf{20} & 10    & 24.78 & 0\% & 0\% \\
      &       & 20    & 28.83 & 0\% & 0\%\\
\hline
\textbf{50 Total} &       &       & \textbf{11.11} & \textbf{0\%} & \textbf{0\%} \\
\hline
\textbf{100} & \textbf{10} & 10    & 32.08 & 0\% & 0\% \\
      &       & 20    & 48.22 & 0\% & 0\% \\
      &       & 50    & 28.14 & 0\% & 0\% \\
      & \textbf{20} & 10    & 109.59 & 0\% & 0\% \\
      &       & 20    & 181.59 & 0\% & 0\% \\
      &       & 50    & 179.04 & 0\% & 0\% \\
\hline
\textbf{100 Total} &       &       & \textbf{96.44} & \textbf{0\%} & \textbf{0\%} \\
\hline
\textbf{200} & \textbf{20} & 10    & 444.32 & 1.47\% & 0\% \\
      &       & 20    & 137.37 & 1.69\% & 0\% \\
      &       & 50    & 70.65 & 1.44\% & 0\% \\
      &       & 100   & 51.63 & 1.40\% & 0\% \\
      & \textbf{50} & 10    & 1195.32 & 1.77\% & 0\% \\
      &       & 20    & 413.42 & 1.81\% & 0\% \\
      &       & 50    & 46.63 & 1.54\% & 0\% \\
      &       & 100   & 40.78 & 1.45\% & 0\% \\
\hline
\textbf{200 Total} &       &       & \textbf{300.01} & \textbf{1.57\%} & \textbf{0\%} \\
\hline
\textbf{500} & \textbf{50} & 10    & 6946.33 & 4.45\% & 20.00\%\\
      &       & 20    & 1041.76 & 2.90\% & 0\% \\
      &       & 50    & 912.06 & 1.93\% & 0\% \\
      &       & 100   & 713.42 & 1.74\% & 0\% \\
      & \textbf{100} & 10    & 5656.14 & 3.10\% & 6.67\% \\
      &       & 20    & 3625.32 & 2.50\% & 0\% \\
      &       & 50    & 2366.02 & 2.47\% & 0\% \\
      &       & 100   & 3195.16 & 1.32\% & 0\% \\
\hline
\textbf{500 Total} &       &       & \textbf{3057.03} & \textbf{2.55\%} & \textbf{3.33\%} \\
\hline
\textbf{Total} &       &       & \textbf{916.80} & \textbf{1.10\%} & \textbf{0.89\%} \\
\end{tabular}}%
  \caption{Summary of our computational experience.\label{t:compres}}
\end{table}%

As mentioned before, we observe that all the instances  with up to $n=100$ have been optimally solved within the time limit. As expected, the computing time increases with the value of $n$ and $m$, while it does not seem to depend on the value of $d$. The average computing time over all these instances is approximately  $46.26$ seconds, being $1.19$ seconds for the instances with $n=25$ , $11.11$ seconds for $n=50$, and $96.44$ for $n=100$. The average  computing time needed to obtain an optimal solution depending on the value of $m$ is $1.5$ seconds for $m=5$, $19.9$ seconds for $m=10$, and $104.8$ seconds for $m=20$.  

Regarding the instances with $n=200$ we observe that all the instances have been solved, which in this case means that all the instances have reached the MIP gap limit within the time limit, being the average consuming time over all the  instances $300$ seconds and the average MIP gap $1.57\%$.  One can observe that, for these instances, the computing time is greater as the lower is the value of the parameter $d$. This may be because  the value of parameter $d$ does not affect to the number of binary variables in our improved formulation and it might happen that the smaller the number of EC and injection points, the more difficult it is to decide where to install the different plants to satisfy them at optimal cost.  This fact can also be observed for the largest instances with $n=500$. For $d=10$, $20\%$ of the instances with $m=50$ and $6.67\%$ of the instances with $m=100$  have not been solved within the time limit, that is, these instances have not reached the MIP gap of $4\%$ in 6 hours. The rest of the instances have been solved within the time limit. The average computing time for all the instances with $n=500$ was $3057$ seconds and the average MIP gap, $2.55\%$.

The results shown in  Table \ref{t:compres}, together with Figure \ref{fig:Pf}, allow us to get a clear empirical evidence that using our improved formulation, the problem  can be solved (with the MIP gap limit) within the time limit using a commercial solver.   In fact, only 4 of the 450 instances (all of them with $n=500$) have reached the time limit  with a MIP Gap greater than the MIP  Gap limit. This number represents less than $1 \%$ of the total instances and only $3.3\%$ of the instances with $n=500$.

From our experiments, we conclude that our approach provides a useful tool to make decisions about on the complex logistic problem behind the energy transformation of waste into biogas in realistic instances using reasonable computational resources. Thus, our model can also be  used to evaluate different alternatives based on different values for the set-up costs for the plants or different transportation costs.

\section{Case Study \label{sec:case-study}}

\begin{figure}[h!]
\centering\includegraphics[scale=0.8]{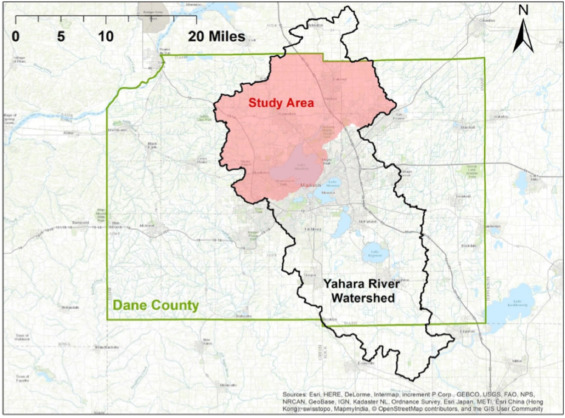}
\caption{Lake Mendota in the Upper Yahara watershed region in Dane County, WI (source: \citep{MA2022107832}).\label{fig:yahara}}
\end{figure}

We tested our model in a real-world dataset based on the region of upper Yahara watershed region in the State of Wisconsin (see Figure \ref{fig:yahara}) whose data is available at  \url{https://github.com/zavalab/JuliaBox/tree/master/Graph\_S\%26C}. A detailed description of this dataset can be found in \citep{MA2022107832,SAMPAT2019352,SAMPAT2021105199,TOMINAC2022107666}. This region consists of  203 dairy farms whose waste production is known and whose locations are predefined. These locations determine our set of WS and the potential positions for the PT plants.   The potential positions for the AD plants and the BL were randomly generated in that region. The positions of the injections points on the GPN were also randomly generated in this region  and a network connecting this points was constructed.  The position of the EC where randomly generated  outside that region but in the minimum rectangular box containing it. %The dataset has also information about the real production of waste at each farm.

In Figure \ref{fig:input} we plot the coordinates of the different agents involved in the W2BLP. Red points in the plot indicate the WS locations (as larger the size of the dot, larger the production of waste in the farm)  and the potential locations for PT, blue points are the potential locations for AD plants and for BL plants, green points indicate the EC locations, grey dotted lines represent the GPN and grey points the injection points of the GPN.

\begin{figure}%[h]
\centering\includegraphics[scale=0.5]{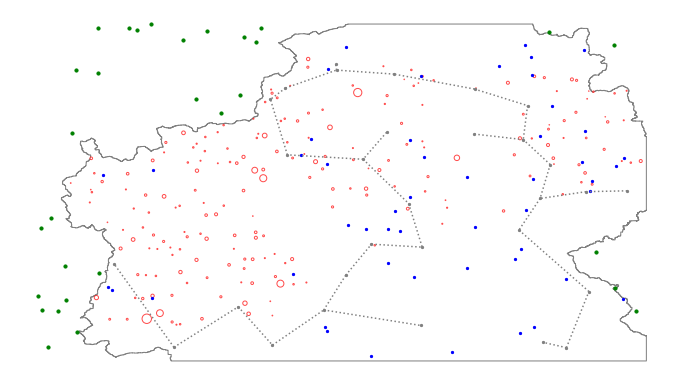}
\caption{Input Data of the Case Study. \label{fig:input}}
\end{figure}

The parameters not provided in the above repository were fixed as follows:

\begin{itemize}
\item $c^1_{ij}=0.3\times {\rm dist}_{ij}$, %where ${\rm dist}_{ij}$ is the Euclidean distance between coordinates 
for $i\in N$ and $j \in P_1$.
\item $c^2_{jk}=0.15 \times {\rm dist}_{jk}$, %where ${\rm dist}_{jk}$ is the Euclidean distance between coordinates 
for $j\in P_1$ and $k \in P_2$.
\item  $c^N_{ji}= 0.15\times {\rm dist}_{ji}$, %where ${\rm dist}_{ij}$ is the Euclidean distance between coordinates 
for $j\in P_2$ and $i \in N$.
\item $D_\ell$ was randomly generated in $[\delta \gamma^1 \beta\times \min w_i, \delta \gamma^1 \beta \times \max w_i]$.
\item The set-up budget was fix to $\kappa \times \widehat{B}$, for $\kappa \in \{0.1, 0.2\}$, where $\widehat{B}$ was computed as described in Section \ref{sec:comput}:
$$\widehat{B} = \begin{cases}
    472.33 & \mbox{if $q=50\%$,}\\
    472.45 & \mbox{if $q=70\%$,}\\
    472.32 & \mbox{if $q=90\%$,}\\
\end{cases}$$
\item The rest of parameters  have  been considered as described in  Section \ref{sec:comput}  (q ranges in
\{50\%, 70\%, 90\%\}, $p=1$, $\delta=0.8$, $\gamma_1=0.8$, $\gamma_2=0.15$, $\beta=0.7$, $\alpha_\ell= \frac{D_{\ell}}{\sum_{l\in C} D_l}$ for all $\ell\in C$, and $R_i=\frac{w_i}{W}$ for all $i\in N$). 
\end{itemize}
The parameters required to reproduce the obtained results are available at \url{https://github.com/vblancoOR/w2blp}.

\begin{table}
\centering

\begin{tabular}{| l | l | l | l |}
\hline
$B$ & $q$ & \textbf{CPU Time} & \textbf{MIPGap} \\
\hline
\multirow{3}{*}{$0.1 \widehat{B}$} & {0.5} & 2595.82 & 0\% \\
 & {0.7} & 18093.03 & 0\% \\
 & {0.9} & 21602.8 & 0.1\% \\\hline
\multirow{3}{*}{$0.2 \widehat{B}$} & {0.5} & 84.67 & 0\% \\
 & {0.7} & 473.81 & 0\% \\
 & {0.9} & 3106.91 & 0\% \\\hline
\end{tabular}
\caption{CPU times (in seconds) and MIP gaps for solving the instances in the case study.\label{tab_cs_times}}
\end{table}

As in the previous section, we set a time limit of 6 hours for solving the problem. %After this time,  
In Table \ref{tab_cs_times} we show the CPU times (in seconds) and MIP Gaps obtained after running our model to the six different configurations of budget and $q$. In that table, one can observe that except the case $B=0.1\widehat{B}$ with $q=0.9$, all the instances were optimally solved within the time limit. The case were the optimality is not guaranteed obtained a MIP gap of $0.1\%$ which is insignificant. Furthermore, as expected, a more limited budget has a significative impact in the CPU time required to solve the problem, being the problems with budget $0.1\widehat{B}$ more challenging than those with budget $0.2\widehat{B}$.

In Figures \ref{fig:Location}, \ref{fig:Flow1andN}, and \ref{fig:FlowFromP_2} we show the results of our experiment for the Yahara watershed dataset for the different choices of budget, $B$, and $q$, and the different decisions that are made by our model.

In Figure \ref{fig:Location}, we show the results about the optimal locations for the PT plants (thick red points), for the AD plants (blue squares), for the BL plants (blue triangles) and for the links connecting AD plants with BL plants (blue lines) and AD plants with injection points of the GPN (black lines). As can be observed, the budget and the percent of production that must be injected in the GPN has a direct impact in the structure of the obtained solutions. On the one hand, as expected, as smaller the budget, smaller the number of plants and pipelines that are open. In particular, the budget mostly affect to the construction of pipelines to either inject to the GPN or transport to the BL, but also in the number of installed plants. We also observed that the network of the different installed plants and pipelines has more connected components for the larger budget, whereas is almost connected for the small budget. Regarding the value of $q$, it seems that as larger the value of $q$ closer the new facilities to the GPN.

\begin{figure}[h!]
     %\centering
     \begin{subfigure}[b]{0.45\textwidth}
         \centering
         \includegraphics[width=\textwidth]{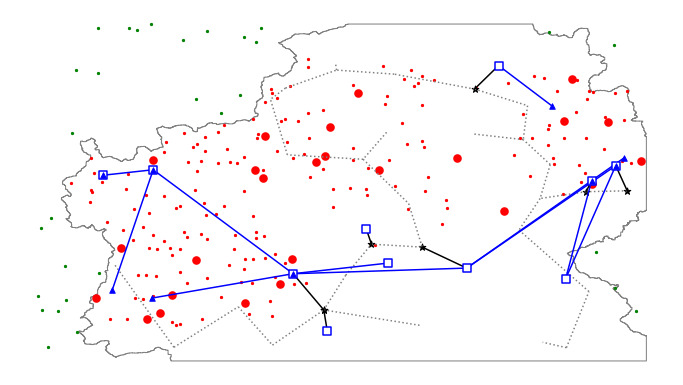}
         \caption{\scriptsize $q=50\%$\quad  and $\mathrm{B} = 0.2  \widehat{B}$. \label{fig:loc52}}
     \end{subfigure}\quad
     \begin{subfigure}[b]{0.45\textwidth}
        % \centering
         \includegraphics[width=\textwidth]{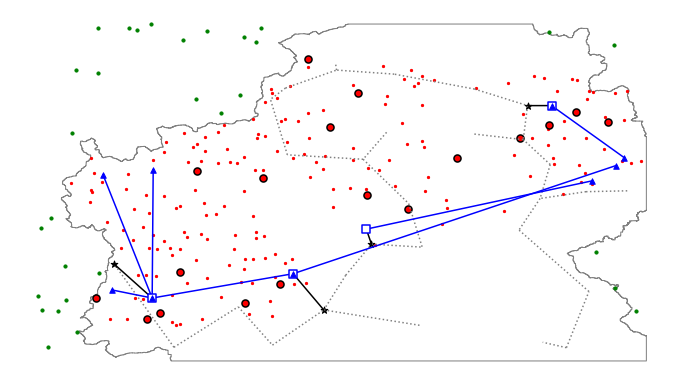}
         \caption{\scriptsize $q=50\%$\quad  and $\mathrm{B} = 0.1  \widehat{B}$. \label{fig:loc51}}
     \end{subfigure}\\
     \begin{subfigure}[b]{0.45\textwidth}
         %\centering
         \includegraphics[width=\textwidth]{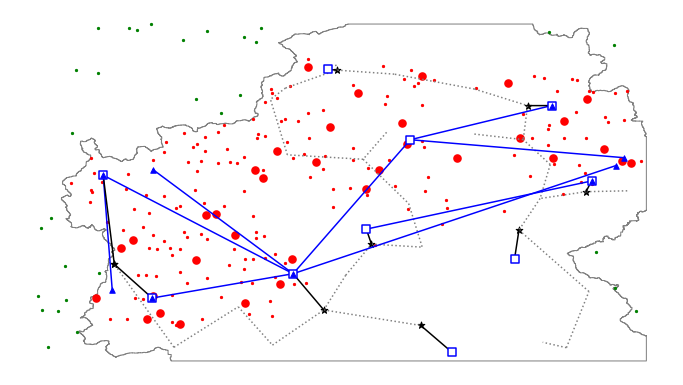}
         \caption{\scriptsize $q=70\%$\quad  and $\mathrm{B} = 0.2  \widehat{B}$. \label{fig:loc72}}
         \end{subfigure}\quad
     \begin{subfigure}[b]{0.45\textwidth}
         %\centering
         \includegraphics[width=\textwidth]{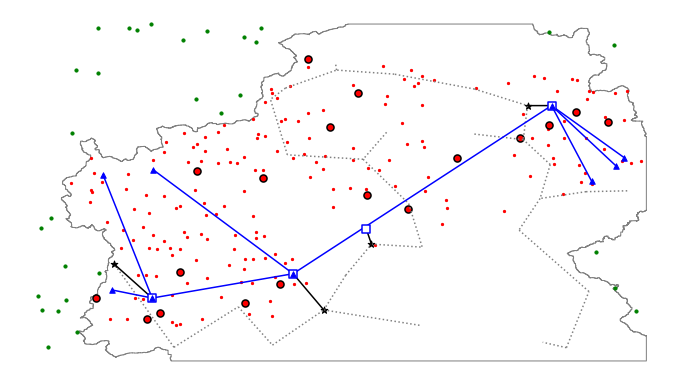}
         \caption{\scriptsize $q=70\%$\quad  and $\mathrm{B} = 0.1  \widehat{B}$. \label{fig:loc71}}
     \end{subfigure}\\
     \begin{subfigure}[b]{0.45\textwidth}
         %\centering
         \includegraphics[width=\textwidth]{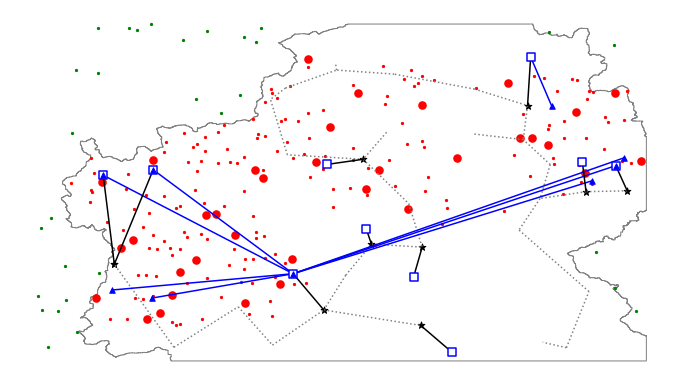}
         \caption{\scriptsize $q=90\%$\quad  and $\mathrm{B} = 0.2  \widehat{B}$. \label{fig:loc92}}
         \end{subfigure}\quad
     \begin{subfigure}[b]{0.45\textwidth}
         %\centering
         \includegraphics[width=\textwidth]{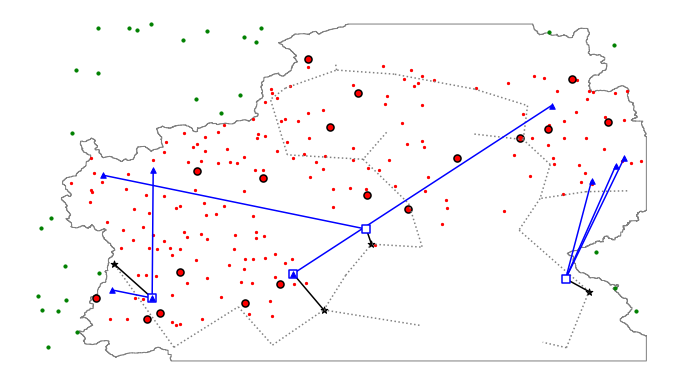}
         \caption{\scriptsize $q=90\%$\quad  and $\mathrm{B} = 0.1  \widehat{B}$. \label{fig:loc91}}
     \end{subfigure}
     \caption{Optimal location of the different types of plants and pipelines for different values of $B$ and $q$. \label{fig:Location}}
    \end{figure}

In Figure \ref{fig:Flow1andN} we show the distribution network  between the WS plants, the PT plants, and the AD plants. Specifically, the links for which there is a positive waste flow from WS to PT plants (red lines), a DOW flow from PT plants to AD plants (purple lines) and digester solid flow from AD plants to WS (yellow lines). The main observation that can be drawn from these plots is that in most of the cases there are differentiated clusters of WS plants (farms) that share the same PT plants and AD plants. This observations implies that for larger datasets where the model could not be able to solve the problem, the optimal solution would be adequately approximated by clustering the WS (with an adequate criterion) and solve the problem separately for each cluster. 

\begin{figure}%[h]
     %\centering
     \begin{subfigure}[b]{0.45\textwidth}
         \centering
         \includegraphics[width=\textwidth]{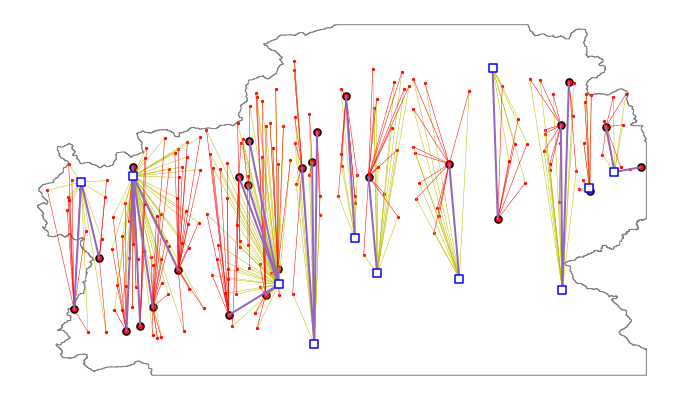}
         \caption{\scriptsize $q=50\%$\quad  and $\mathrm{B} = 0.2  \widehat{B}$. \label{fig:Flow1andN52}}
     \end{subfigure}\quad
     \begin{subfigure}[b]{0.45\textwidth}
        % \centering
         \includegraphics[width=\textwidth]{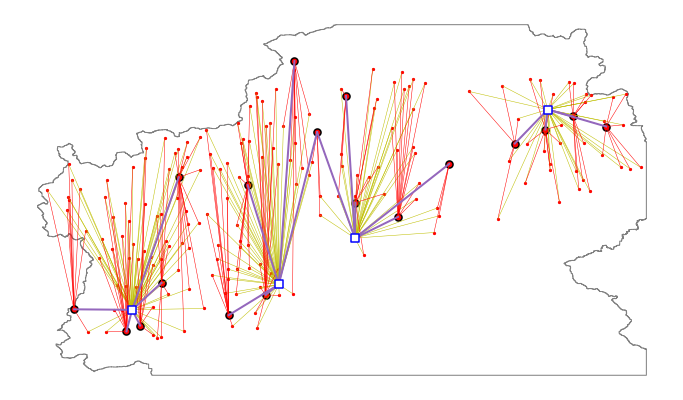}
         \caption{\scriptsize $q=50\%$\quad  and $\mathrm{B} = 0.1  \widehat{B}$. \label{fig:Flow1andN51}}
     \end{subfigure}\\
     \begin{subfigure}[b]{0.45\textwidth}
         %\centering
         \includegraphics[width=\textwidth]{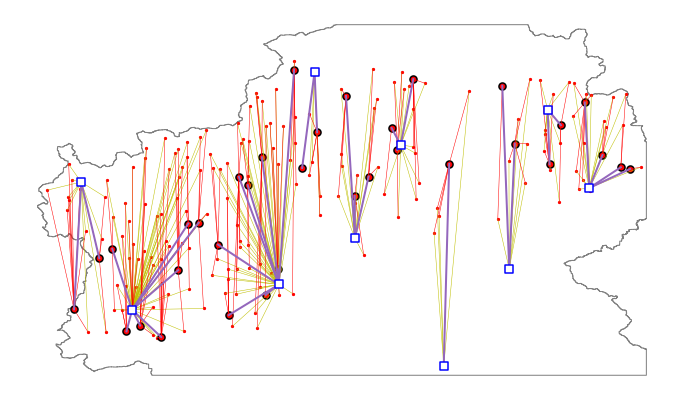}
         \caption{\scriptsize $q=70\%$\quad  and $\mathrm{B} = 0.2  \widehat{B}$. \label{fig:Flow1andN72}}
         \end{subfigure}\quad
     \begin{subfigure}[b]{0.45\textwidth}
         %\centering
         \includegraphics[width=\textwidth]{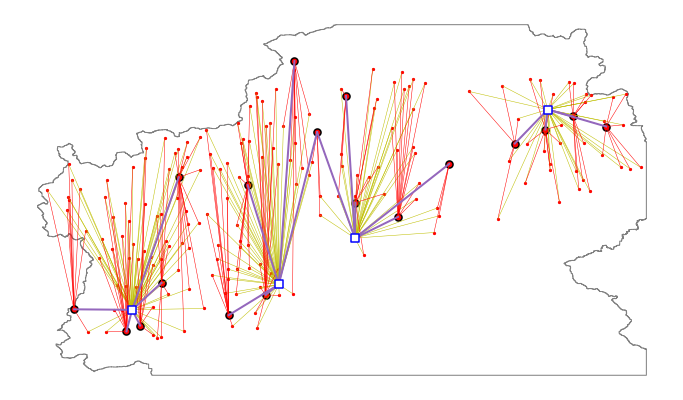}
         \caption{\scriptsize $q=70\%$\quad  and $\mathrm{B} = 0.1  \widehat{B}$. \label{fig:Flow1andN71}}
     \end{subfigure}\\
     \begin{subfigure}[b]{0.45\textwidth}
         %\centering
         \includegraphics[width=\textwidth]{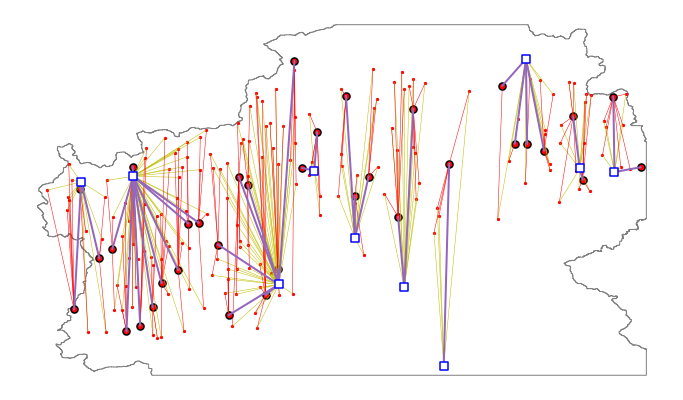}
         \caption{\scriptsize $q=90\%$\quad  and $\mathrm{B} = 0.2  \widehat{B}$. \label{fig:Flow1andN92}}
         \end{subfigure}\quad
     \begin{subfigure}[b]{0.45\textwidth}
         %\centering
         \includegraphics[width=\textwidth]{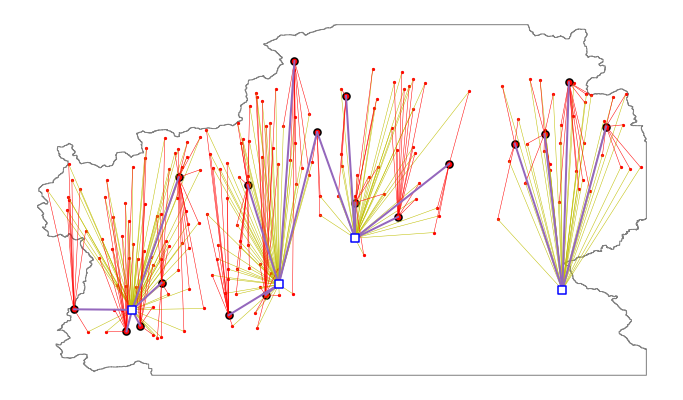}
         \caption{\scriptsize $q=90\%$\quad  and $\mathrm{B} = 0.1  \widehat{B}$. \label{fig:Flow1andN91}}
     \end{subfigure}
     \caption{Distribution network between WS, PT plants, and AD plants.\label{fig:Flow1andN}}
    \end{figure}

Finally, in Figure \ref{fig:FlowFromP_2}, we show the biomethane flow from AD plants to BL plants (blue lines) and to injection points of the GPN (black lines), and LGN flow from BL plants to EC (green lines). Note that some of the AD plants are devoted only to give service to the GPN, others that serve exclusively the BL plants (and then the EC), but one can also find AD plants that send part of the production to the GPN and what remains, to the BL plants. This behaviour would have never happen if an integrated model as the one we propose would not have been considered. Note also that a single AD plant is allowed to send biomethane to different BL plants.

\begin{figure}%[h]
     %\centering
     \begin{subfigure}[b]{0.45\textwidth}
         \centering
         \includegraphics[width=\textwidth]{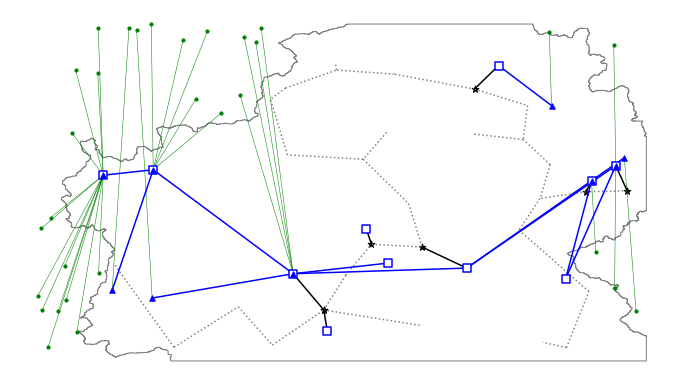}
         \caption{\scriptsize $q=50\%$\quad  and $\mathrm{B} = 0.2  \widehat{B}$. \label{fig:FlowFromP_252}}
     \end{subfigure}\quad
     \begin{subfigure}[b]{0.45\textwidth}
        % \centering
         \includegraphics[width=\textwidth]{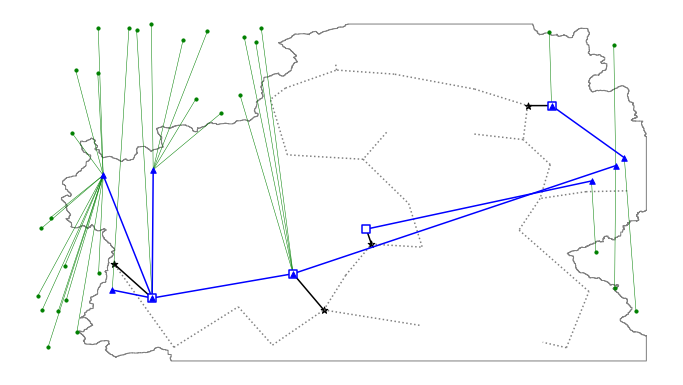}
         \caption{\scriptsize $q=50\%$\quad  and $\mathrm{B} = 0.1  \widehat{B}$. \label{fig:FlowFromP_251}}
     \end{subfigure}\\
     \begin{subfigure}[b]{0.45\textwidth}
         %\centering
         \includegraphics[width=\textwidth]{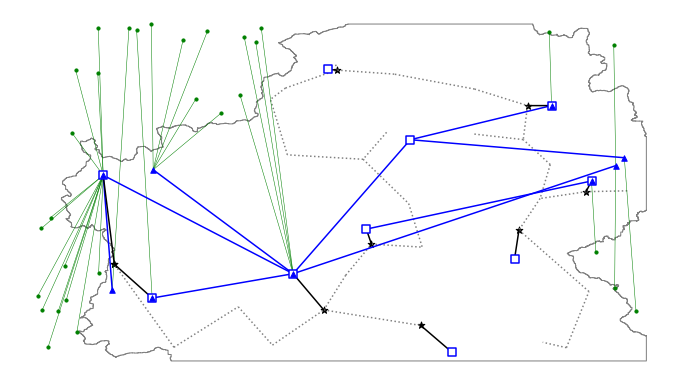}
         \caption{\scriptsize $q=70\%$\quad  and $\mathrm{B} = 0.2  \widehat{B}$. \label{fig:FlowFromP_272}}
         \end{subfigure}\quad
     \begin{subfigure}[b]{0.45\textwidth}
         %\centering
         \includegraphics[width=\textwidth]{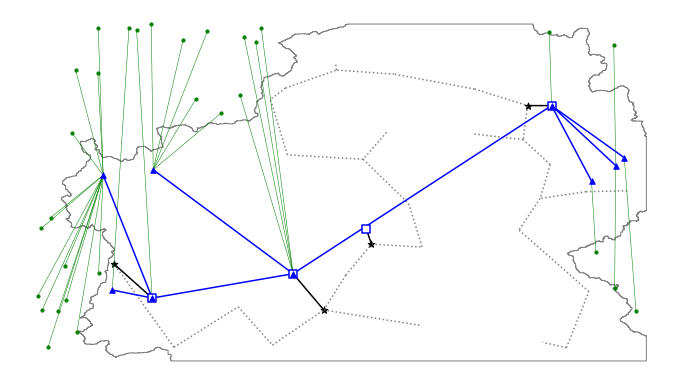}
         \caption{\scriptsize $q=70\%$\quad  and $\mathrm{B} = 0.1  \widehat{B}$. \label{fig:FlowFromP_271}}
     \end{subfigure}\\
     \begin{subfigure}[b]{0.45\textwidth}
         %\centering
         \includegraphics[width=\textwidth]{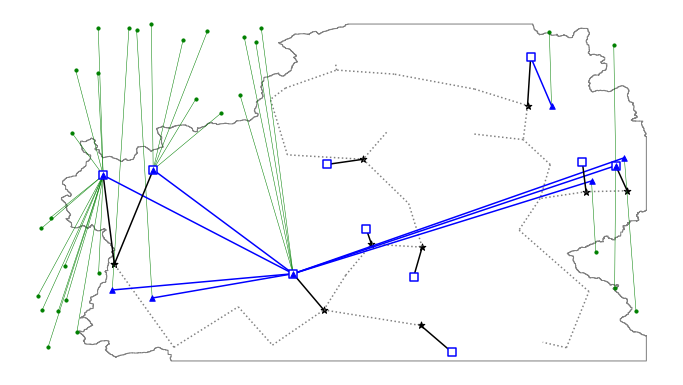}
         \caption{\scriptsize $q=90\%$\quad  and $\mathrm{B} = 0.2  \widehat{B}$. \label{fig:FlowFromP_292}}
         \end{subfigure}\quad
     \begin{subfigure}[b]{0.45\textwidth}
         %\centering
         \includegraphics[width=\textwidth]{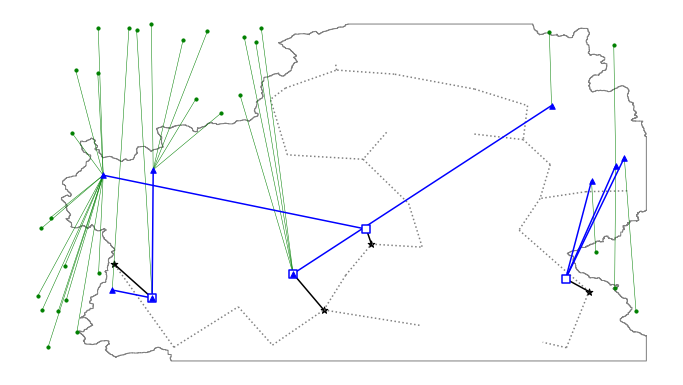}
         \caption{\scriptsize $q=90\%$\quad  and $\mathrm{B} = 0.1  \widehat{B}$. \label{fig:FlowFromP_291}}
     \end{subfigure}
     \caption{Distribution network between AD plants, BL plants, injection points of the GPN, and EC.\label{fig:FlowFromP_2}}
    \end{figure}

\section{Conclusions \label{sec:conclu}}

In this paper, we propose a mathematical optimization approach to design an optimal logistic system to distribute the different products involved in the generation of biogas from waste. Specifically, given a set of waste storage centers, an existing gas pipeline network, and a set of external customers, we provide a decision aid tool to determine the number and optimal locations of pre-treatment plants, anaerobic digestion plants, and biomethane liquefaction plants, as well as the pipelines linking some of these plants. Additionally, we provide a distribution plan to send the different wastes, to serve either the gas pipeline network or the external customers with the produced biogas, and to serve the waste storage centers with the generated fertilizer. All the decisions are made by minimizing the transportation costs of the different products and restricting the installation of plants and pipelines to a given budget.

We develop a new mixed integer linear programming formulation for the problem and prove some results that allow us to reduce the size of the model. With this simplification, we were able to obtain optimal solutions for the problem in real-world intances.

We report the results of an extensive battery of synthetic computational experiments, concluding that our approach is suitable to be applied to different settings and sizes. We also analyze the case study of the Yahara watershed, and study the obtained solutions based on different parameters.

Our future research on this topic includes the incorporation of uncertainty in some parameters of the model. Concretely, the production of waste in the farms or waste storages is known to vary in the different seasons of the year, and this production may has an impact in the solution of the problem. We will design stochastic optimization models that take into account this uncertainty to construct robust solution of the W2BLP. Additionally, we will consider capacity constraints for the different plants and multiple periods for the decisions made in our model. Instead of assuming that the plants and pipelines are installed here-and-now, we will decide in which period of a time horizon each of the installations is constructed and used, assuming that they have a limited capacity and that a budget is available for each period. The integrated model that consider uncertainty, capacity constraints and multiperiod decisions will be closer to reality, but the mathematical programming model for it would be prohibitive even for small instances. Thus, a different solution approach will have to be designed to solve it, that would not be exact, but heuristic.

\section*{Acknowledgement}
This research has been partially supported by Spanish Ministerio de Ciencia e Innovación, AEI/FEDER grant number PID 2020 - 114594GBC21, RED2022-134149-T (Thematic Network on Location Science and Related Problems); FEDER+Junta de Andalucía projects B-FQM-322-UGR20, C‐EXP‐139‐UGR23, and AT 21\_00032; VII PPIT-US (Ayudas Estancias Breves, Modalidad III.2A); and the IMAG-Maria de Maeztu grant CEX2020-001105-M /AEI /10.13039/501100011033. The authors also acknowledge the support of the Wisconsin Institute of Discovery (University of Wisconsin-Madison) for using its computational resources.

\end{document}